\newtheorem{tw}{Theorem}[section]
\theoremstyle{remark}
\newtheorem{ex}[tw]{Example}
\theoremstyle{definition}
\newcommand{\Cal}[1]{\mathcal{#1}}
\newcommand{\bez}{\setminus}
\newcommand{\sig}{\sigma}
\newcommand{\fal}[1]{\widetilde{#1}}
\newcommand{\gen}[1]{\langle #1 \rangle}
\newcommand{\map}[3]{#1\colon #2\to #3}
\newcommand{\field}[1]{\mathbb{#1}}
\newcommand{\zz}{\field{Z}}
\newcommand{\rr}{\field{R}}
\begin{document}

\numberwithin{equation}{section}
\title[Subgroups of the Torelli group generated by two symmetric\ldots]
{Subgroups of the Torelli group generated by two symmetric bounding pair maps}

\author{Micha\l\ Stukow}

\thanks{Supported by grant 2015/17/B/ST1/03235 of National Science Centre, Poland.}
\address[]{
Institute of Mathematics, University of Gda\'nsk, Wita Stwosza 57, 80-952 Gda\'nsk, Poland }

\email{trojkat@mat.ug.edu.pl}


\keywords{Mapping class group, Torelli group, Free group, Bounding pair map} \subjclass[2000]{Primary 57N05; Secondary 20F38, 57M99}


\begin{abstract}
Let $\{a,b\}$ and $\{c,d\}$ be two pairs of bounding simple closed curves on an oriented surface which intersect nontrivialy. We prove that if these pairs are invariant under the action of an orientation reversing involution, then the corresponding bounding pair maps generate a free group. This supports the conjecture stated by C.~Leininger and D.~Margalit that any pair of elements of the Torelli group either commute or generate a free group.
\end{abstract}

\maketitle%
 \section{Introduction}%
Let ${\Cal{M}}(S_g)$ be the \emph{mapping class group} of a closed oriented surface $S_g$ of genus $g$, that is the group of isotopy classes of orientation preserving diffeomorphisms of $S_g$. 
If we include isotopy classes of orientation reversing maps, we obtain the so--called \emph{extended mapping class group} ${\Cal{M}}^{\pm}(S_g)$. 
The mapping class group acts on $\textrm{H}_1(S_g;\zz)$ and the kernel of this action ${\Cal{I}}(S_g)$ is called the \emph{Torelli subgroup} of ${\Cal{M}}(S_g)$. The Torelli subgroup plays an important role in a study of mapping class group -- see for example Chapter~8 of \cite{MargaliFarb} and references there. 

The Torelli group 
is generated by two types of elements: by \emph{Dehn twists} $t_a$ about separating simple closed curves, and by \emph{bounding pair maps}, which are defined as products of Dehn twists $t_at_b^{-1}$ about disjoint and homologous simple closed curves \cite{Powell}. This generating set can be further simplified,  for example it is known \cite{JohnAnn} that if $g\geq 3$, then  ${\Cal{I}}(S_g)$ is generated by a finite number of bounding pair maps. 

As for relations, much less is known. For example, the only relations we know between two elements of ${\Cal{I}}(S_g)$ are obvious commutativity relations. Hence C.~Leininger and D.~Margalit conjectured \cite{MargLein} that any pair of elements of the Torelli group either commute or generate a free group. In fact, they showed in \cite{MargLein} that the pure braid group has such property. It is known \cite{Ishida} that the Leininger--Margalit conjecture is true for Dehn twists about separating simple closed curves, so the next natural question is if the conjecture is true for bounding pair maps. The goal of this paper is to show that there is a large class of bounding pair maps which support the Leininger--Margalit conjecture -- see Theorem \ref{Main:thm}.
\section{Symmetric bounding pairs}
Following \cite{RolPar}, we say that simple closed curve $a$ in $S_g$ is \emph{essential} if it does not bound a disk. We say that $M$ is an \emph{essential subsurface} of $S_g$ if $M$ is a closed connected subsurface of $S_g$
and each component of $\partial M$ is essential in $S_g$.
We say that $\{a,b\}$ is a \emph{bounding pair} in $S_g$, 
if $a$ and $b$ are two disjoint, homologous, and nonisotopic simple closed curves in $S_g$. 
We say that $\{a,b\}$ is a bounding pair in an essential subsurface $M\subset S_g$, if $\{a,b\}$ is a bounding pair in $S_g$ and $a,b$ are contained in the interior of $M$. Observe that $S_g$ admits bounding pairs only if $g>2$. 

Let 
$\map{\sig}{M}{M}$ be an orientation reversing involution of an essential subsurface $M\subset S_g$, and assume that $\{a,b\}$ is bounding pair in $M$.
If $\sig(a)=b$, then we say that $(\sig,M)$ is a \emph{symmetry} of the bounding pair $\{a,b\}$. We say that two bounding pairs $\{a,b\}$ and $\{c,d\}$ are \emph{symmetric}, if they have a common symmetry $(\sig,M)$.
\begin{ex}\label{Ex:1}
If $\{a,b\}$ is any bounding pair in $S_g$, then $a$ and $b$ bound an essential subsurface $M_{a,b}$. Moreover, we can slightly enlarge $M_{a,b}$ and assume that $a,b$ are contained in the interior of $M_{a,b}$. Hence, if $\map{\sig}{M_{a,b}}{M_{a,b}}$ is any orientation reversing involution which interchange $a$ and $b$, then $(\sig,M_{a,b})$ is a symmetry of $\{a,b\}$. 

Moreover, by the classical results of Harnack \cite{Harnack} and Weichold \cite{Weichold}, we know the full topological classification of such orientation reversing involutions (symmetries) of $M_{a,b}$. If $M_{a,b}$ has genus $h$, then there are $\left\lceil\frac{h+1}{2}\right\rceil$ symmetries for which the complement $M_{a,b}\bez \rm{Fix}(\sig)$ of the set $\rm{Fix}(\sig)$ of fixed points of $\sig$ is disconnected (so--called \emph{separating} symmetries), and $h+1$ symmetries for which $M_{a,b}\bez \rm{Fix}(\sig)$ is connected (so--called \emph{nonseparating} symmetries).
In order to explain their geometric interpretation, assume that $M_{a,b}$ is embedded in $\rr^3$ in a symmetric manner as shown in Figure~\ref{r01}. 
\begin{figure}[h]
\begin{center}
\includegraphics[width=0.9\textwidth]{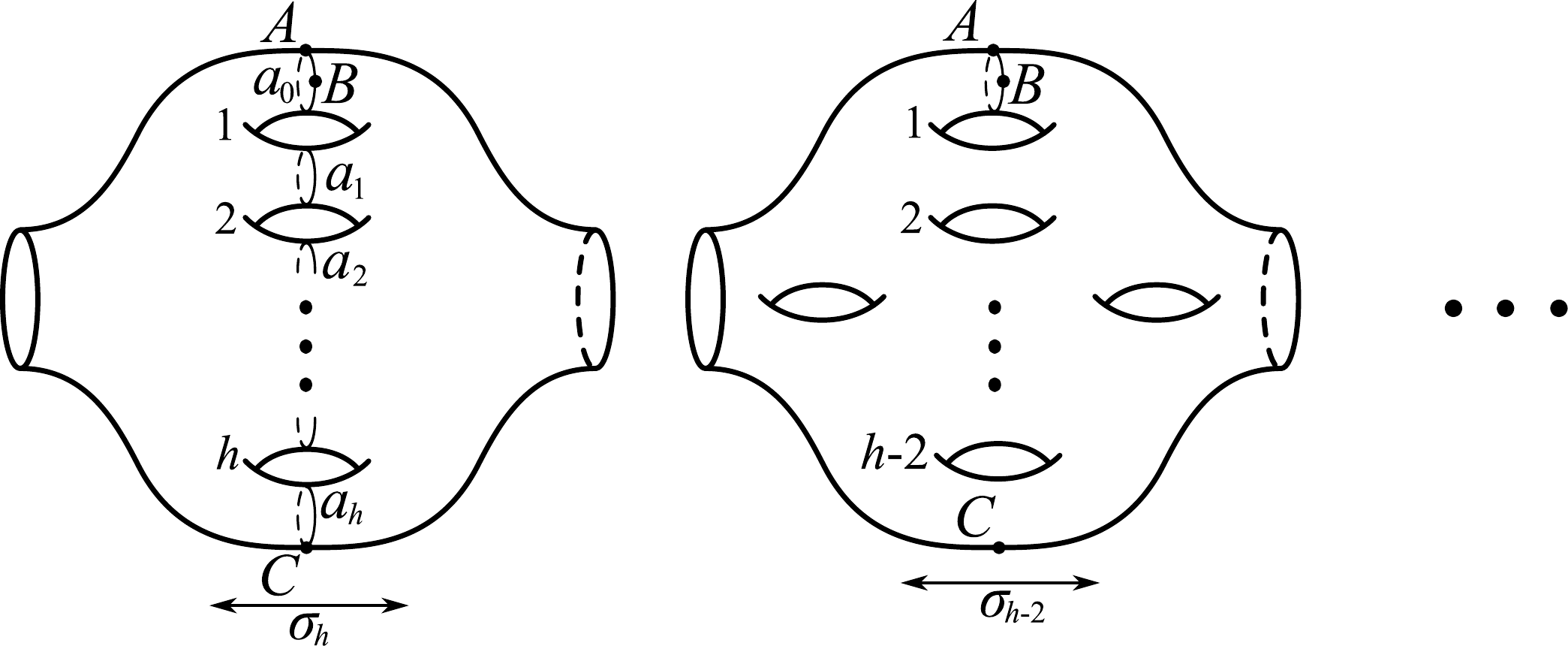}
\caption{Symmetries $\sig_h,\sig_{h-2},\ldots$ of a subsurface $M_{a,b}$.}\label{r01} %
\end{center}
\end{figure}
In this figure, separating symmetries $\sig_{h},\sig_{h-2},\ldots$ are symmetries across the vertical plane containing the points $\{A,B,C\}$. If $a_0,a_1,\ldots,a_h$ are simple closed curves as in Figure \ref{r01}, then the nonseparating symmetries can be defined as $\tau_i=\sig_ht_{a_h}t_{a_{h-1}}\cdots t_{a_{i}}$ for $i=0,1,\ldots,h$. It is straightforward to check that $\rm{Fix}(\tau_i)$ consists of $i$ circles: $\{a_0,a_1,\ldots,a_{i-1}\}$. Observe also that the orbit space $M_{a,b}/\gen{\sig}$ is orientable if and only if $\sig$ is separating.
\end{ex}
If $\{a,b\}$ and $\{c,d\}$ are symmetric bounding pairs in $S_g$, then we define their \emph{intersection number} as
\[I(\{a,b\},\{c,d\})=I(a,c)+I(b,c),\]
where on the right hand side we have the usual geometric intersection number of simple closed curves. If $I(\{a,b\},\{c,d\})=0$ then
the corresponding boundary pair maps $t_{a}t_b^{-1}$ and $t_ct_{d}^{-1}$ commute, hence we are interested in the case $I(\{a,b\},\{c,d\})>0$. 
Observe that since $a$ and $b$ bound a subsurface of $S_g$, $I(\{a,b\},\{c,d\})$ is always even.
\begin{ex}
 Using the analysis made in Example \ref{Ex:1} it is straightforward to construct interesting examples of symmetric bounding pairs $\{a,b\}$ and $\{c,d\}$. For example Figure \ref{r02} shows how to construct a symmetric pair of bounding pairs $\{a,b\}$ and $\{c,d\}$ such that $I(\{a,b\},\{c,d\})=4$. 
 \begin{figure}[h]
\begin{center}
\includegraphics[width=0.8\textwidth]{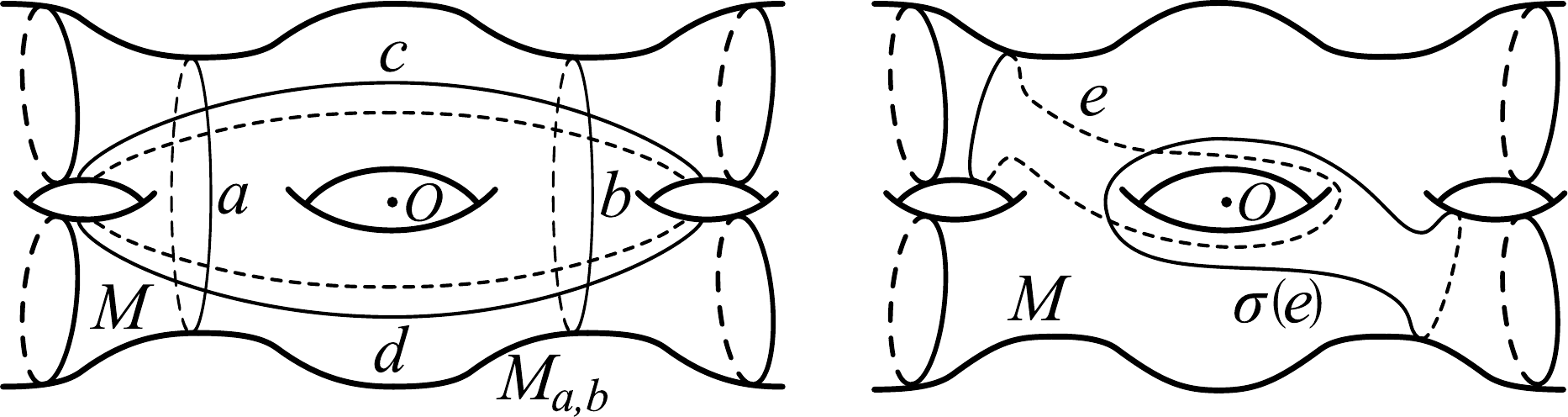}
\caption{Bounding pairs on subsurface $M$.}\label{r02} %
\end{center}
\end{figure}
 The symmetry we use in this figure is a point reflection $\map{\sig}{M}{M}$ through the symmetry centre $O$ of $M_{a,b}$
 
 Moreover, if $e$ is a simple closed curve indicated in this figure and $n>0$, then, by the well know formula \cite{Ishida} for the geometric intersection number, we have
 \[I(t_e^n(c),a)\geq nI(e,c)I(e,a)-I(c,a)=4n-2.\]
 Hence we can construct symmetric bounding pairs $\{a,b\},\{t_e^{n}(c),t_{\sig(e)}^{-n}(d)\}$ with arbitrary large intersection number $I(\{a,b\},\{t_e^{n}(c),t_{\sig(e)}^{-n}(d)\})$. 
 
 Observe also that our definition of a bounding pair $\{a,b\}$ implies that both $M_{a,b}$ and $S_g\bez M_{a,b}$ has genus at least 1, hence the above construction can be applied to any arbitrary chosen bounding pair $\{a,b\}$. 
\end{ex}
Now we are ready to state the main theorem of the paper.
\begin{tw}\label{Main:thm}
 Let $\{a,b\}$ and $\{c,d\}$ be symmetric bounding pairs in $S_g$, and assume that $I(\{a,b\},\{c,d\})>0$. Then the corresponding bounding pairs maps $t_at_b^{-1}$ and $t_ct_d^{-1}$ generate a free subgroup of ${\Cal{I}}(S_g)$.
\end{tw}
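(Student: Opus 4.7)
The plan is to exploit the common symmetry to descend both bounding pair maps to single Dehn twists on the quotient surface $N=M/\gen{\sig}$, and then to invoke a standard free-group criterion for two intersecting Dehn twists. Set $f=t_at_b^{-1}$ and $g=t_ct_d^{-1}$. Since $\sig$ is orientation reversing and swaps $a\leftrightarrow b$ (respectively $c\leftrightarrow d$), one has $\sig t_a\sig^{-1}=t_b^{-1}$ and $\sig t_c\sig^{-1}=t_d^{-1}$; combined with $[t_a,t_b]=[t_c,t_d]=1$, this shows that both $f$ and $g$ commute with $\sig$. By the Birman--Hilden theorem applied to the (possibly branched) double cover $M\to N$, the centraliser of $\sig$ in $\Cal M(M)$, modulo $\gen{\sig}$, embeds into the mapping class group of $N$; moreover, since $\sig$ is orientation reversing while $f$ and $g$ are orientation preserving, this descent is already injective on $\gen{f,g}$.

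Next I would identify the descended elements. The pair $a\cup b$ projects to a single simple closed curve $\bar a$ in $N$ which is two-sided: a $\sig$-equivariant tubular neighbourhood of $a\cup b$ splits as a disjoint union of two annuli and projects onto a single annular neighbourhood of $\bar a$. A local computation on that neighbourhood shows that $f$ descends to $t_{\bar a}^{\pm 1}$, and similarly $g$ descends to $t_{\bar c}^{\pm 1}$. Note that $N$ is orientable when $\sig$ is separating and non-orientable when $\sig$ is non-separating, but $\bar a$ and $\bar c$ are two-sided in either case, so both Dehn twists are well defined.

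A short intersection count is then needed. Equivariance gives $I(a,c)=I(b,d)$ and $I(a,d)=I(b,c)$, so $(a\cup b)\cap(c\cup d)$ consists of $2I(\{a,b\},\{c,d\})$ points, grouped into free $\sig$-orbits of size two: since $\sig$ swaps $a$ and $b$, no such intersection can be a fixed point of $\sig$. After putting the four curves into an equivariant minimal position in $M$ and applying the bigon criterion downstairs, one obtains $I(\bar a,\bar c)=I(\{a,b\},\{c,d\})\geq 2$.

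Finally, I would invoke a free-group theorem for two Dehn twists whose geometric intersection number is at least two: Ishida's theorem \cite{Ishida} in the orientable case, and its analogue for non-orientable surfaces when $\sig$ is non-separating. This produces $\gen{t_{\bar a},t_{\bar c}}\cong F_2$ inside the mapping class group of $N$, which lifts through the injective descent to give $\gen{f,g}\cong F_2$ inside $\Cal I(S_g)$. The main obstacle will be executing the Birman--Hilden step cleanly: one has to treat the branched cover that arises when $\sig$ has fixed circles, check that $f$ and $g$ descend to genuine Dehn twists rather than twists composed with maps supported near the branch locus, and ensure that the chosen free-group criterion for two Dehn twists is available in the non-orientable setting as well.
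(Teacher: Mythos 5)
Your overall strategy coincides with the paper's: quotient $M$ by the symmetry $\sig$, identify $t_at_b^{-1}$ and $t_ct_d^{-1}$ with the single Dehn twists $t_{\pi_\sig(a)}$ and $t_{\pi_\sig(c)}$ on $M/\gen{\sig}$, show these projected curves have geometric intersection number at least $2$, and invoke Ishida's theorem \cite{Ishida} together with its nonorientable analogue (Theorem 13.2 of \cite{StukowTwoTwists}) to obtain a free group downstairs, which is then transferred back. (The paper phrases the transfer via a section $s_\sig$ of $(\pi_\sig)_*$ rather than via an injective Birman--Hilden descent, but this is only a difference of direction; as you implicitly use, a two-generator group surjecting onto $F_2$ with generators mapping to free generators is itself free, so injectivity of the descent is not even needed.) However, your argument has one genuine gap: it ends on the wrong surface. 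Everything you do produces a free subgroup of $\Cal{M}(M)$, where $M$ is the essential subsurface carrying the symmetry, and $M$ is in general a proper subsurface of $S_g$ (in the basic example it is the subsurface $M_{a,b}$ bounded by $a$ and $b$). The inclusion-induced map $\map{i_*}{\Cal{M}(M)}{\Cal{M}(S_g)}$ is not injective in general, so freeness in $\Cal{M}(M)$ does not immediately give freeness in $\Cal{I}(S_g)$. The paper closes this with Theorem 4.1 of \cite{RolPar}: the kernel of $i_*$ is generated by elements central in $\Cal{M}(M)$, hence any element of $\gen{t_at_b^{-1},t_ct_d^{-1}}$ killed by $i_*$ is central in a nonabelian free group and therefore trivial. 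You need this step or an equivalent.

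A second point is not fatal but is glossed over where the paper does real work: ``putting the four curves into an equivariant minimal position and applying the bigon criterion downstairs'' does not by itself yield $I(\pi_\sig(a),\pi_\sig(c))\geq 2$. Even with $a\cup b$ and $c\cup d$ in minimal position in $M$, an innermost bigon between $\pi_\sig(a)$ and $\pi_\sig(c)$ need not lift to a bigon between the original curves: the paper must separately exclude the configuration in which the lifted arc of $\pi_\sig(c)$ joins a point of $a\cap c$ to a point of $b\cap c$, and does so by noting that, because $\sig$ reverses orientation, this would force the circle $p\cup q$ to be one-sided, contradicting that it bounds a disk. You should either reproduce this lifting argument or supply another justification for the minimality of the projected curves.
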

\begin{proof}
 Without loss of generality we can assume that 
 \[\begin{aligned}
 &|a\cap c|=I(a,c)=I(b,d)=|b\cap d|\\
 &|a\cap d|=I(a,d)=I(b,c)=|b\cap c|.
 \end{aligned}\]
 Since $I(\{a,b\},\{c,d\})$ is even, then 
 \[I(\{a,b\},\{c,d\})=I(a,c)+I(b,c)\geq 2.\]
 Let $\map{\pi_\sig}{M}{M/\gen{\sig}}$ be the canonical projection on the orbit space. 
 
 Since $d=\sig(c)$ and $c$ are disjoint, $c$ is disjoint from the set of fixed points of $\sig$, and $\sig$ is a bijection between the sets $c\cap(a\cup b)$ and $d\cap (a\cup b)$. In particular
 \[|\pi_\sig(a)\cap \pi_\sig(c)|=I(a,c)+I(a,d)=I(a,c)+I(b,c)\geq 2.\]
 We now argue that $\pi_\sig(a)$ and $\pi_\sig(c)$ are in a minimal position with respect to the intersection number. Suppose to the contrary that there is a disk $\Delta$ bounded by two arcs $p,q$ of $\pi_\sig(a)$ and $\pi_\sig(c)$ respectively. By taking the inner most such disk we can assume that there are no intersection points of $\pi_\sig(a)$ and $\pi_\sig(c)$ between the endpoints $P,Q$ of $p$ and $q$. Let $\fal{\Delta}$ be a lift of $\Delta$ with respect to $\pi_\sig$ such that the lift $\fal{p}$ of $p$ is an arc of $a$, and let $\fal{q}$ be the corresponding lift of $q$. There are 3 possible cases: either $\fal{q}$ connects two intersection points of $a\cap c$, or $\fal{q}$ connects two intersection points of $a\cap d$, or else $\fal{q}$ connects intersection points of $a\cap c$ and $b\cap c$. In the first two cases we get a contradiction with the assumption that $c$ and $d$ are in a minimal position with $a$ and $b$, and the third case (see Figure \ref{r03}) would imply that $p\cup q$ is an one-sided circle (since $\sig$ is orientation reversing). 
 \begin{figure}[h]
\begin{center}
\includegraphics[width=0.28\textwidth]{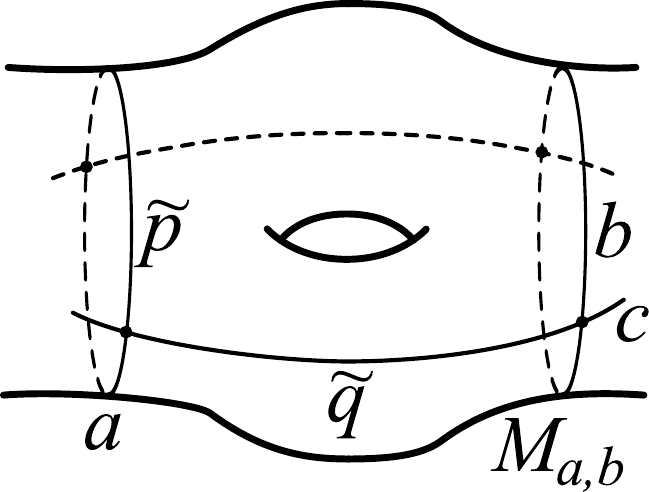}
\caption{One of the possible lifts of $p\cup q$.}\label{r03} %
\end{center}
\end{figure}
 Hence we proved that 
 \[I(\pi_\sig(a),\pi_\sig(c))=|\pi_\sig(a)\cap \pi_\sig(c)|\geq 2.\]
 By Theorems 1.2 of \cite{Ishida} and 13.2 of \cite{StukowTwoTwists}, 
 $t_{\pi_\sig(a)}$ and $t_{\pi_\sig(c)}$ 
 generate a free subgroup of ${\Cal{M}}(M/\gen{\sig})$ (note that $M/\gen{\sig}$ is nonorientable if $\sig$ is nonseparating). Moreover, for each $f\in {{\Cal{M}}(M/\gen{\sig})}$ we can choose an orientation preserving lift $s_{\sig}(f)=\fal{f}\in {{\Cal{M}}^{\pm }(M)}$ of $f$, which defines a section $\map{s_\sig}{{\Cal{M}}(M/\gen{\sig})}{{\Cal{M}}^{\pm }(M)}$ of the projection $\map{(\pi_\sig)_*}{{\Cal{M}}^{\pm }(M)}{{\Cal{M}}(M/\gen{\sig})}$.
 Hence $t_at_b^{-1}=s_{\sig}(t_{\pi_\sig(a)})$ and $t_ct_d^{-1}=s_{\sig}(t_{\pi_\sig(c)})$ generate a free subgroup of ${\Cal{M}}(M)$. 
 
 It remains to show that these two elements generate a free subgroup of ${\Cal{M}}(S)$. By Theorem 4.1 of \cite{RolPar}, the kernel of the inclusion \[\map{i_*}{{\Cal{M}}(M)}{{\Cal{M}}(S)}\]
 is generated by elements which are central in ${\Cal{M}}(M)$. Hence if $k$ is an element of the kernel of 
 \[\map{i_*}{\gen{t_at_b^{-1},t_ct_d^{-1}}}{{\Cal{M}}(S)},\]
 then $k$ is central in the free group $\gen{t_at_b^{-1},t_ct_d^{-1}}$, hence $k=1$.
\end{proof}
\section*{Acknowledgements}
The author wishes to thank Dan Margalit, who first observed that there is a connection between our main result in \cite{StukowTwoTwists} and the Leininger--Margalit conjecture.



%
%

\end{document}